\documentclass[11pt,a4paper]{amsart}

\usepackage[T1]{fontenc}
\usepackage[utf8]{inputenc}
\usepackage{lmodern}
\usepackage{mathtools,amssymb}
\usepackage{enumitem}
\usepackage{microtype}
\usepackage[colorlinks=true,linkcolor=blue,citecolor=blue,urlcolor=blue]{hyperref}
\usepackage[nameinlink,noabbrev]{cleveref}
\usepackage[a4paper,margin=1in]{geometry}

\newtheorem{theorem}{Theorem}[section]
\newtheorem{lemma}[theorem]{Lemma}
\theoremstyle{definition}
\newtheorem{definition}[theorem]{Definition}

\newcommand{\eqm}{\equiv_{\mathrm m}}
\newcommand{\lem}{\le_{\mathrm m}}
\newcommand{\lfo}{\le_{\mathrm{fo}}}
\newcommand{\ce}{c.e.}
\newcommand{\om}{\omega}
\newcommand{\comp}[1]{\overline{#1}}

\title[No least finite-one degree in a c.e. many-one degree]{A computably enumerable many-one degree with no least finite-one degree}
\author{Patrizio Cintioli}
\address{Mathematics Division, School of Science and Technology, University of Camerino, Italy}
\email{patrizio.cintioli@unicam.it}

\subjclass[2020]{Primary 03D30; Secondary 03D25.}
\keywords{many-one degrees; finite-one degrees; many-one reducibility; finite-one reducibility; computably enumerable sets; finite-injury priority construction}

\begin{document}

\begin{abstract}
Richter, Stephan, and Zhang asked whether every nonrecursive many-one degree contains a least finite-one degree. We solve this question in the negative, already within the class of computably enumerable many-one degrees. 
Positive answers are known in two disjoint natural settings: for a measure-one and comeager class of $m$-rigid sets, and, in a companion paper, for computably enumerable many-one degrees containing a $D$-maximal set.
We construct a nonrecursive \ce\ set $A$ such that for every set $X \eqm A$ there exists a c.e.\ set $B \eqm A$ with $X \not\lfo B$. Hence the many-one degree of $A$ contains no least finite-one degree. The proof is a finite-injury priority construction based on virtual target sets and a dynamic trap mechanism forcing any putative finite-one reduction either to violate finite-oneness or to compute an incorrect reduction.
\end{abstract}

\maketitle

\section{Introduction}

Many-one reducibility and its refinements provide a natural framework for studying
the internal structure of many-one degrees. Besides one-one reducibility, the
intermediate notions of finite-one and bounded finite-one reducibility make it
possible to analyse how much finer structure can occur strictly below a many-one
degree while still remaining above its greatest one-one degree. A recent and
broad investigation of these themes is due to Richter, Stephan, and Zhang
\cite{RSZ2026}; see also \cite{Odifreddi1981,OdifreddiCRT,OdifreddiCRT2, Odifreddi1999,Rogers1967, Soare1987}
for general background on strong reducibilities.

Among the questions isolated in \cite{RSZ2026}, the first asks whether every
nonrecursive many-one degree contains a least finite-one degree. This problem is
particularly natural: finite-one reducibility is weak enough to allow genuinely
new internal behaviour inside a many-one degree, yet strong enough to suggest
that some canonical minimum representative might still exist.

There is substantial positive evidence for such a minimum phenomenon, but it
comes from two very different directions. First, in
\cite{Cintioli2026}, it is shown that for a measure-one and comeager class of
$m$-rigid sets, the many-one degree contains a least finite-one degree. Since
$m$-rigid sets are bi-immune, this is a typical phenomenon occurring outside the
computably enumerable world. Second, a companion paper shows that every
nonrecursive c.e.\ many-one degree containing a $D$-maximal set also contains a
least finite-one degree. Thus the known positive evidence covers two disjoint
natural classes: a typical non-c.e.\ class and a structural c.e.\ subclass.

We solve Open Question~1 of \cite{RSZ2026} in the negative, already within
the class of computably enumerable many-one degrees.
More precisely, we prove that there is a nonrecursive c.e.\ many-one degree
in which no member is finite-one reducible to all the others.

\begin{theorem}[Main Theorem]\label{thm:main}
There exists a nonrecursive \ce\ set $A$ such that for every set $X$ many-one
equivalent to $A$, there exists a set $B$ many-one equivalent to $A$ such that
$X$ is not finite-one reducible to $B$. In particular, the many-one degree of
$A$ contains no least finite-one degree.
\end{theorem}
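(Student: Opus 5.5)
We will build $A$ by a finite-injury priority construction, and we first reduce the quantification over arbitrary $X \eqm A$ to a single effective family. The first step is the reduction. Suppose $X \eqm A$, and suppose towards a contradiction that $X \lfo B$ for every $B \eqm A$. In particular $A \lfo X$ would fail to help directly, but since $A \lem X$ we can compose. If $A \lem X$ via $g$ and $X \lfo B$ via $f$, then $f\circ g$ is an $m$-reduction of $A$ to $B$. It need not be finite-one, because $g$ may be infinite-to-one. So we exploit the other direction instead. From $X \lem A$ via $h$ and $A \lem X$ via $g$, the set $X$ contains a copy of $A$ up to $m$-equivalence, and we aim to make $A$ such that no single $X$ can be finite-one below all members of the degree.

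Our concrete strategy is as follows. We will show that $X \lfo B$ for all c.e.\ $B\eqm A$ already forces $A \lfo B$ for all such $B$, via the $m$-reductions composed with $X$. More precisely, let $A' = \{\langle x,n\rangle : x\in A\}$-style cylinder-like variants. Since the many-one degree contains $X$, the set $A\oplus X$ lies in it as well. Hence it suffices to defeat, for the fixed $A$, every pair $(\Phi_e,\Phi_i)$, where $\Phi_e$ is a candidate $m$-reduction $h$ of $X$ to $A$ and $\Phi_i$ is the candidate finite-one reduction into $B$. Since $X$ is not effective data, we instead diagonalize against the \emph{shape} of $X$ through $h$. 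We will build a single c.e.\ $A$ together with, for each $e$, a c.e.\ set $B_e \eqm A$. The requirement is
\[ R_{e,i}:\ \text{if } \Phi_e \text{ is total, then } \Phi_i \text{ is not a finite-one reduction of } \{y : \Phi_e(y)\in A\} \text{ to } B_e. \]
This handles every $X$, since $X = h^{-1}(A)$ for some total computable $h=\Phi_e$. The sets $B_e$ are the ``virtual targets'': $B_e$ is $A$ joined with a trap region $T_e = B_e\setminus(\text{copy of }A)$. Equivalence $B_e\eqm A$ is maintained by coding every trap element into $A$, so that each trap element enters $B_e$ exactly when some designated witness enters $A$. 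We also keep the standard Friedberg-style requirements $P_j$ that $A\ne \comp{W_j}$, which make $A$ nonrecursive.

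For $R_{e,i}$ we use the dynamic trap. The strategy picks fresh inputs $y$ to $h=\Phi_e$ whose $h$-images are fresh numbers not yet decided in $A$. This is possible provided $h$ has infinite range outside finitely many restrained elements. If the range of $h$ is finite, then $X$ is recursive and the requirement is satisfied trivially once $A$ is nonrecursive. We then wait for $\Phi_i(y)\downarrow = z$. If $z$ lands in the trap region, or in the coded copy of $A$, we can decide the membership of $z$ in $B_e$ at will. We enumerate, or withhold, $h(y)$ into $A$ so that $y\in X \not\leftrightarrow z\in B_e$. This is possible since $h(y)$ is still free and the relevant part of $B_e$ is controlled by our own coding. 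The danger is that $\Phi_i$ maps many distinct $y$ to $z$'s whose $B_e$-membership is tied to a single element of $A$ that we cannot move independently. We exploit finite-oneness here. We pick infinitely many candidate $y$'s with pairwise distinct fresh $h(y)$. A finite-one $\Phi_i$ must eventually send some $y$ to a $z$ outside the finite set of restrained locations. Otherwise infinitely many $y$ would collapse onto finitely many $z$, contradicting finite-oneness.

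The main obstacle is consistency between the diagonalization and the requirement $B_e\eqm A$. The trap elements in $B_e$ must be coded into $A$ via a computable $m$-reduction fixed in advance. Also $A$ must reduce back into $B_e$, which we get by the copy of $A$ inside $B_e$. When $z$ falls into the copy of $A$ at a position already fixed by higher priority or by $h$ itself, acting may injure the coding. I will try the following first. Let the copy of $A$ in $B_e$ be spread so that each $a$ has infinitely many representatives in $B_e$, and let the trap points be paired with dedicated fresh elements of $A$. Then any computed $z$ either is a trap point, handled freely, or represents some $a$. In the latter case we choose $y$ with $h(y)\neq a$, which is possible by fresh choices, and set $h(y)$ into $A$ opposite to $a$'s eventual state, which requires $a$ to be restrained by $R_{e,i}$. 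Each $R_{e,i}$ acts finitely often and imposes finite restraint, so the standard finite-injury verification completes the argument. Finally, the ``in particular'' clause is immediate from the main statement: a least finite-one degree would contain some $X$ with $X\lfo B$ for every $B\eqm A$.
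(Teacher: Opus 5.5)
\textbf{There is a fatal gap: your targets $B_e$ always satisfy $A\le_1 B_e$, so the case $X=A$ can never be defeated.}

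You obtain $A\lem B_e$ from a copy of $A$ inside $B_e$. Sending each $a$ to one of its own representatives is a computable injection $\iota$, so $A\le_1 B_e$. By composition, every $X\lfo A$ is then finite-one reducible to $B_e$. Take $\Phi_e=\mathrm{id}$, so $X=A$: the map $\Phi_i=\iota$ is a finite-one reduction of $X$ to $B_e$, and $R_{e,i}$ fails. No priority bookkeeping repairs this.

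Your case analysis misses it. For $\Phi_i=\iota\circ h$, every $y$ lands on a representative of $h(y)$ itself, so $y\in X\iff h(y)\in A\iff \Phi_i(y)\in B_e$ for all $y$. The escape ``choose $y$ with $h(y)\neq a$'' treats $a$ as fixed, but here $a$ moves with $y$. The bad locations are not a finite restrained set, so finite-oneness of $\Phi_i$ gives you nothing. The theorem applied to $X=A$ already requires some $B\eqm A$ with $A\not\lfo B$. So whatever reduction $A\lem B$ you build must have infinite fibres.

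\textbf{What the paper does instead.} Its reduction $\Theta_m\colon A\lem B_m$ is built dynamically and is deliberately non-finite-one. A run of $\mathcal R_l$ picks fresh baits $a_1,a_2,\dots$. It ties each to a single trap point via $\Theta_m(a_v)=b^*$, with $\Lambda_m(b^*)=a_1$, before the filler can assign a private value. It only ever enumerates the whole block at once. To transfer control to $X$, the paper parametrizes by $\Delta_j\colon A\lem W_i$, the direction you abandoned. This lets it choose baits freely and read off $x_v=\Delta_j(a_v)$. Correctness of $\Delta_j$ forces the $x_v$ to be pairwise distinct (the collision step). So a finite-one $\Phi_k$ must send some $x_v$ to a $y_v\neq b^*$, whose value $c=\Lambda_m(y_v)$ lies outside the block. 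The block is then set opposite to $c$.

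\textbf{A second, independent gap.} Your requirements are not conditioned on $X\eqm A$, and the claim that finite-range $h$ makes $R_{e,i}$ trivially true is false. For $h$ constant with value in $A$, or an injective enumeration of an infinite recursive subset of $A$, you get $X=\om$. But $\om\le_1 B_e$ for every infinite c.e.\ set $B_e$. The paper avoids this by making $\mathcal R_l$ conditional on $\Delta_j$ being a correct reduction. It then uses that correctness both in the collision step and in the final diagonalization.
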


Thus Open Question~1 of \cite{RSZ2026} is settled in the negative.
Indeed, the general existence of a least finite-one degree fails already in the c.e.\ setting.
Combined with the positive results mentioned above, this yields a
more nuanced picture of the landscape suggested by \cite{RSZ2026}: a least
finite-one degree exists throughout important natural subclasses, including a
measure-one and comeager class of sets and a natural c.e.\ structural class,
but it is not a universal feature of nonrecursive many-one degrees; indeed, it
already fails for nonrecursive c.e.\ many-one degrees.

The proof is an asynchronous finite-injury priority construction. 
Given a candidate set $X \eqm A$, coded by a putative reduction $\Delta_j \colon A \lem W_i$, 
we build a corresponding virtual target $B_m \equiv_m A$. 
Against each proposed finite-one reduction from $W_i$ to $B_m$, a dedicated requirement deploys a dynamic trap strategy. 
The construction is arranged so that any such candidate reduction must either fail to be finite-one or fail to compute a correct reduction.

The paper is organised as follows. In
\cref{sec:preliminaries} we fix notation and recall the relevant reducibilities.
In \cref{sec:virtual-targets} we introduce virtual targets and the anchor-block
viewpoint underlying the construction. In \cref{sec:construction} we present the
priority construction, including the totality filler and the dynamic trap
modules. Finally, in \cref{sec:verification} we verify that every valid
candidate $X \eqm A$ is defeated by a suitable target $B_m \eqm A$, thereby
establishing \Cref{thm:main}.

\section{Preliminaries and Notation}\label{sec:preliminaries}

We assume familiarity with standard notions and notation from computability theory; for general background see
\cite{Calude2002,ChongYu2015,DH2010,LiVitanyi2008,Nies2009,OdifreddiCRT,Odifreddi1999,OdifreddiCRT2,Rogers1967,Soare1987}.

Let $\om=\{0,1,2,\dots\}$ denote the set of natural numbers. 
We fix standard effective numberings $(\varphi_e)_{e\in\omega}$ of the partial computable functions and $(W_e)_{e\in\omega}$ of the computably enumerable sets, arranged so that
\[
W_e=\operatorname{dom}(\varphi_e)
\]
for every $e$.
We write $\varphi_e(x)\downarrow$ when the computation halts.
Our construction proceeds in effective stages. We denote by $\varphi_{e,s}(x)$ the stage-$s$ approximation to $\varphi_e(x)$. By convention, if $\varphi_{e,s}(x)\downarrow$, then the computation halts in strictly fewer than $s$ steps, we have $x<s$, and its output is strictly less than $s$.
We also fix effective numberings $(\Delta_j)_{j\in\omega}$ and $(\Phi_k)_{k\in\omega}$
of all partial computable functions from $\omega$ to $\omega$. Thus every total
computable function appears as some $\Delta_j$ and as some $\Phi_k$.
As with $\varphi_e$, we denote their finite approximations evaluated by stage $s$ as $\Delta_{j,s}$ and $\Phi_{k,s}$.
The numbering $(\Delta_j)_{j\in\omega}$ will be used for candidate many-one reductions, typically witnessing reductions of the form
\[
A \lem W_i,
\]
whereas $(\Phi_k)_{k\in\omega}$ will be used for candidate finite-one reductions from $W_i$ to the virtual targets constructed later.
For a set $A \subseteq \om$, its complement is denoted by $\comp{A} = \om \setminus A$.
We implicitly identify a set with its characteristic function, so $A\neq \varphi_n$ means that $\varphi_n$ is not the characteristic function of $A$.

\subsection{Reducibilities and Degrees}
For sets $A, B \subseteq \om$, we say that $A$ is \emph{many-one reducible} to $B$, denoted $A \lem B$, if there exists a total computable function $f \colon \om \to \om$ such that $x \in A \iff f(x) \in B$ for all $x \in \om$. If both $A \lem B$ and $B \lem A$, we say that $A$ and $B$ are many-one equivalent, denoted $A \eqm B$.
The many-one degree (or m-degree) of a set $A$ is its equivalence class under $\eqm$.

By imposing finiteness constraints on the fibres of the reduction function $f$, we obtain finer reducibilities.

\begin{definition}
A total computable function $f \colon \om \to \om$ is called \emph{finite-one} if the preimage of every element is finite; that is, for all $y \in \om$, the set $f^{-1}(y) = \{x \in \om \mid f(x) = y\}$ is finite. 
\end{definition}

We write $A \lfo B$ if there is a finite-one reduction from $A$ to $B$.

\begin{definition}
The equivalence classes induced by mutual finite-one reducibility are the finite-one degrees.
A many-one degree $\mathbf{a}$ is said to contain a \emph{least finite-one degree} if there exists $X \in \mathbf{a}$ such that for every $Y \in \mathbf{a}$, we have $X \lfo Y$.
Equivalently, the finite-one degree of $X$ is least among those represented inside $\mathbf{a}$.
\end{definition}

The existence of such an $X$ means that $X$ is minimal for finite-one reducibility within its own m-degree. Accordingly, to prove that the m-degree of a set $A$ contains no least finite-one degree, it suffices to show that for every $X \eqm A$ there exists $B \eqm A$ such that $X \not\lfo B$.

\subsection{Columns and Fresh Elements}
We fix a standard bijective computable pairing function $\langle \cdot, \cdot \rangle \colon \om \times \om \to \om$.
Using a standard computable coding of finite tuples, we fix pairwise disjoint infinite computable sets
\[
\omega^{[m,k]}=\{\langle 0,m,k,n\rangle : n\in\omega\}
\quad\text{and}\quad
\omega^{[*]}=\{\langle 1,n\rangle : n\in\omega\}.
\]
We call these sets \emph{columns}.
We refer to $\omega^{[*]}$ as the \emph{neutral column}, which will be used exclusively to supply witnesses for the requirements $\mathcal D_n$ introduced below.

Throughout the priority argument, when we specify to pick a \emph{``fresh''} element, we mean choosing a natural number strictly larger than any number mentioned, evaluated, or used in the construction up to the current stage $s$, including the stage number $s$ itself. This strict monotonicity guarantees that fresh elements avoid any previously established constraints or pre-existing block links.

\section{Virtual Targets and Current Anchor Blocks}\label{sec:virtual-targets}

This section introduces the virtual targets $B_m$ and the anchor-block viewpoint
that underlies the construction. The purpose is to isolate the mechanism that
will later ensure $B_m \equiv_m A$ for every valid parameter $m$, while keeping
the action on the target indirect, through the coding maps $\Theta_m$ and $\Lambda_m$.
Suppose an opponent claims $X \equiv_m A$. Then $X=W_i$ for some index $i$, and
there exists a total many-one reduction $\Delta_j \colon A \lem W_i$. We code
this candidate by the parameter $m=\langle i,j\rangle$.

For each $m$ we build partial computable approximations $\Theta_{m,s}$ and
$\Lambda_{m,s}$. Whenever $\Delta_j$ is total, these approximations are 
monotonically extended to total computable functions $\Theta_m$ and $\Lambda_m$. We then define
\[
B_m=\{y\in\omega : \Lambda_m(y)\in A\}.
\]
By definition, $\Lambda_m$ reduces $B_m$ to $A$.

To obtain $\Theta_m \colon A \lem B_m$, it is enough to maintain
\[
(\forall x)\qquad x\in A \iff \Lambda_m(\Theta_m(x))\in A.
\]
Write
\[
\sigma_m=\Lambda_m\circ\Theta_m.
\]
The construction is arranged so that, for every valid parameter $m=\langle i,j\rangle$, the final map $\sigma_m=\Lambda_m\circ\Theta_m$ satisfies the following dichotomy:

\begin{quote}
For every $x$, exactly one of the following holds: either $x$ is never tied in
any run (the \emph{private case}, yielding $\sigma_m(x)=x$), or $x$ is tied in a
unique run $\rho$ of a unique requirement $l=\langle m,k\rangle$, in which case
$\sigma_m(x)=a_1^{\, l,\rho}$, where $a_1^{\, l,\rho}$ is the anchor chosen in that
run.
Moreover, whenever tied elements from a given run are enumerated into
$A$, the entire finite current anchor block of that run is enumerated
simultaneously.
\end{quote}

Thus, for the purpose of the reduction $A\lem B_m$, the only nontrivial identifications
created by $\sigma_m$ are among witnesses tied in a single run of a single requirement.
The filler routine eventually makes all other points private.
In particular, the construction never uses an ambient equivalence relation on
$\omega$; only the current finite anchor blocks created during individual runs
matter.

The set $B_m$ is therefore a virtual target: we do not enumerate it directly, but only act on $A$ and on the coding maps $\Theta_m$ and $\Lambda_m$. To defeat a candidate finite-one reduction $\Phi_k\colon W_i\to B_m$, the requirement $\mathcal R_{\langle m,k\rangle}$ chooses a trap point $b^*$ and successive fresh baits $a_v$. If a collision $x_v=x_u$ occurs, the run terminates immediately and this already shows that $\Delta_j$ was not a correct reduction. Otherwise, whenever $x_v=\Delta_j(a_v)$ is new, it ties $a_v$ to $b^*$, so that $\sigma_m(a_v)=a_1$.
If some computation $\Phi_k(x_v)$ never converges, then $\Phi_k$ is not total and hence cannot be a finite-one reduction.
If $\Phi_k(x_v)=b^*$ for infinitely many distinct values $x_v$, then $\Phi_k$ is not finite-one. Otherwise, on the first escape value $y_v\neq b^*$, the construction waits until $y_v$ is not frozen and $\Lambda_m(y_v)$ is defined, lets $c=\Lambda_m(y_v)$, and then acts according to whether $c$ has entered $A$ by that stage, thereby forcing $\Phi_k$ to disagree with $W_i\to B_m$.

\section{The Priority Construction}\label{sec:construction}

We now present the finite-injury priority construction witnessing the main theorem.
After specifying the requirements, we describe the global stage protocol, the
totality filler, and the two kinds of modules implementing diagonalization and
the dynamic trap strategy.
To ensure that $A$ is nonrecursive, we satisfy the standard diagonalization
requirements
\begin{description}
    \item[$\mathcal{D}_n$] If $\varphi_n$ is a total computable function, then
    $A\neq \varphi_n$.
\end{description}

To defeat finite-one candidates, we satisfy the negative requirements
\begin{description}
    \item[$\mathcal{R}_l$] Let $l=\langle m,k\rangle$ and
    $m=\langle i,j\rangle$. If $\Delta_j\colon A\lem W_i$ is a total
    many-one reduction, then $\Phi_k$ is not a finite-one reduction from
    $W_i$ to $B_m$.
\end{description}

We interleave the requirements into a single priority list
\[
\mathcal Q_0,\mathcal Q_1,\mathcal Q_2,\dots
\]
where $\mathcal Q_{2n}=\mathcal D_n$ and $\mathcal Q_{2l+1}=\mathcal R_l$.

\paragraph{\textbf{Conventions for the modules.}}
Each requirement carries a persistent \emph{local state} together with a finite
tuple of \emph{local parameters}. These data are part of the construction and
persist from stage to stage until the requirement is initialized by a
higher-priority requirement.
At the start of the construction, every requirement is in State~1 with empty local data.

A \emph{run} of a requirement begins either at the start of the construction or when the
requirement is initialized, and ends either when the requirement is next initialized or when
it performs the terminal action of that run, whichever comes first.

To \emph{initialize} a requirement means to
discard any stored local parameters from its previous run, unfreeze any bait currently
frozen by that run, and reset its local state to State~1.
If a requirement has not declared itself satisfied and is not initialized, then
at the next stage it resumes from exactly the same state with exactly the same stored parameters.

When a requirement of priority index $p$ \emph{initializes all lower-priority
requirements} during a stage $s$, this means that every requirement
$\mathcal Q_q$ with $p<q\le s$ is initialized immediately. No action is needed
for $q>s$, since such requirements have not yet been visited and are still in
State~1 with empty local data.

Accordingly, instructions such as ``pick a fresh witness'', ``pick a fresh
bait'', or ``pick a trap point'' are understood to mean: pick such an object
only if it has not already been chosen in the current run, and then keep it
fixed for the rest of that run unless the requirement is initialized.
Similarly, a clause of the form ``wait until \dots'' means that the requirement
remains in its current state and takes no further action until the stated
condition is met.

A requirement \emph{declares itself satisfied} only when it performs the
terminal action of its current run. Once satisfied, it remains inactive unless
it is later initialized by a higher-priority requirement.

For $\mathcal D_n$, the local data of a run consist only of its current witness
$x_n$.
For $\mathcal R_l$, the local data of a run consist of its current
counter $v$, its trap point $b^*$, the finite list of baits already chosen
in the current run, the corresponding values $x_r$ for those chosen baits
whose computations $\Delta_j(a_r)$ have converged, and the current value
$y_v=\Phi_k(x_v)$ whenever it has converged.
When $\mathcal R_l$ returns from State~4 to
State~2 in the branch $y_v=b^*$, this does \emph{not} start a new run: the
requirement keeps the same trap point $b^*$, retains all previously chosen
baits together with their corresponding converged values, increments $v$,
forgets only the temporary value $y_v$, and continues the same run by choosing
a new bait for the new value of $v$.

\paragraph{\textbf{Operational semantics of a stage.}}
At stage $s$ we visit the modules
\[
\mathcal Q_0,\mathcal Q_1,\dots,\mathcal Q_s
\]
in priority order, and only after that do we execute the filler for all
parameters $m,z\le s$.
When a module is visited at stage $s$, it performs one \emph{local routine},
possibly empty if the module is currently inactive.
A local routine is a finite block of
primitive updates carried out atomically before the next module is visited and
before the filler is run. Primitive updates may:
\begin{enumerate}[label=\textup{(\arabic*)}, leftmargin=2.5em]
\item change the local state and local parameters of the module;
\item freeze or unfreeze finitely many currently stored baits;
\item enumerate finitely many numbers into $A$;
\item define finitely many previously undefined values of $\Theta_m$ or
$\Lambda_m$;
\item initialize finitely many lower-priority requirements among
$\mathcal Q_{p+1},\dots,\mathcal Q_s$, where $p$ is the priority index of the
current module.
\end{enumerate}
Here and below, the phrase ``initializes all lower-priority requirements'' is understood in the sense fixed above.

No value of $\Theta_m$ or $\Lambda_m$ is ever changed once defined.

If, during its local routine, a module changes from one state to the next, then
the instructions of the new state are executed immediately in the same stage,
in the written order, until the routine reaches an explicit waiting clause or a
terminal action. Thus phrases such as ``move immediately to State~2'',
``once this happens, move to State~3'', and ``upon entering State~3'' are to be
understood literally within one stage.

In particular, in the noncollision branch of State~3 for $\mathcal R_l$, the
assignment
\[
\Theta_m(a_v)=b^*
\]
is performed before $a_v$ is unfrozen. Hence the filler can never act in
between these two steps and cannot privatize $a_v$. Likewise, in the collision
branch, the unfreezing of $a_v$, the enumeration of the already tied anchor
block, and the initialization of lower-priority requirements all occur before
the filler of that stage.

The passive loop in State~4 is still stage-by-stage rather than instantaneous.
Indeed, if at stage $s$ the branch $y_v=b^*$ returns the module from State~4 to
State~2, then the newly chosen bait for the incremented value of $v$ is fresh and
therefore strictly larger than $s$.
By the stage convention on computations, the computation
\[
\Delta_{j,s}(a_v)
\]
cannot already have converged for this new bait,
so the local routine necessarily stops by waiting in State~2.
Thus every local routine is finite.

\subsection{The Totality Filler}

At the end of stage $s$, process all pairs $(m,z)$ with $m,z\le s$
in some fixed computable order, say lexicographic order, and for each such pair do the following.

\begin{enumerate}
\item If $z\notin \bigcup_k \omega^{[m,k]}$ and neither $\Theta_m(z)$ nor $\Lambda_m(z)$
is defined, put
\[
\Theta_m(z)=z,\qquad \Lambda_m(z)=z.
\]

\item If $z\in\omega^{[m,k]}$ for some $k$ and $z$ is not currently frozen for $m$ by any requirement, then perform the following checks in order.
\begin{enumerate}
\item If $\Theta_m(z)$ is undefined, pick a fresh $y\in\omega^{[m,k]}$ and set
\[
\Theta_m(z)=y,\qquad \Lambda_m(y)=z.
\]

\item If $\Lambda_m(z)$ is undefined, pick a fresh $x\in\omega^{[m,k]}$ and set
\[
\Lambda_m(z)=x,\qquad \Theta_m(x)=z.
\]
\end{enumerate}
\end{enumerate}

All fresh numbers are chosen larger than the current stage and larger than every
number mentioned so far. No value of $\Theta_m$ or $\Lambda_m$ is ever changed
after being assigned.

\subsection{Module for \texorpdfstring{$\mathcal{D}_n$}{Dn}}

When $\mathcal D_n$ is initialized, it discards any stored witness (if any) and
enters State~1.
State~2 performs the terminal action of the current run. After this action,
$\mathcal D_n$ remains inactive unless it is later initialized by a
higher-priority requirement.

\begin{description}[leftmargin=*, style=nextline]
    \item[State 1 (waiting)]
    If no current witness has yet been chosen in the current run, pick a fresh
    witness $x_n\in\omega^{[*]}$. Wait until $\varphi_{n,s}(x_n)\downarrow$.
    Once this happens, move to State~2.

    \item[State 2 (terminal action)]
    Upon entering State~2:
    \begin{itemize}
        \item If $\varphi_{n,s}(x_n)=0$, enumerate $x_n$ into $A$.
        \item If $\varphi_{n,s}(x_n)\neq 0$, do nothing.
    \end{itemize}
    Declare satisfied and initialize all lower-priority requirements.
\end{description}

\subsection{Module for \texorpdfstring{$\mathcal{R}_l$}{Rl} (Dynamic Trap)}

Let $l=\langle m,k\rangle$ and $m=\langle i,j\rangle$.
The requirement $\mathcal R_l$ chooses all its baits inside the column
$\omega^{[m,k]}$, and any elements that it enumerates into $A$ from this run
also lie in that column.
When $\mathcal R_l$ is initialized, it discards all stored data from its most recent run
(in particular, when present, its current counter $v$, its trap point $b^*$,
the finite list of baits already chosen in that run, the corresponding values
$x_r$ for those chosen baits whose computations $\Delta_j(a_r)$ have converged,
and the current value $y_v$ if present), unfreezes any bait currently frozen by
that run, and enters State~1.
If the current run has not yet performed its terminal action, these data persist
from stage to stage unless $\mathcal R_l$ is initialized. After the terminal
action of a run, $\mathcal R_l$ becomes inactive, and its stored data are
discarded only when it is next initialized.

\begin{description}[leftmargin=*, style=nextline]
\item[State 1 (start of a run)]
Set $v=1$ and move immediately to State~2.

\item[State 2 (choose the current bait and wait for $\Delta_j$)]
If the current bait $a_v$ has not yet been chosen in this run, pick a fresh
$a_v\in\omega^{[m,k]}$ and freeze it for $m$.
If $v=1$ and the trap point $b^*$ has not yet been chosen in this run, pick a
fresh $b^*\in\omega^{[m,k]}$ and define
\[
\Lambda_m(b^*)=a_1.
\]
Check whether $\Delta_{j,s}(a_v)\downarrow=x_v$. Once this happens, move to State~3.

\item[State 3 (collision test, tie step, and wait for $\Phi_k$)]

If $x_v=x_u$ for some $u<v$, then unfreeze $a_v$ and leave it untied (so the filler will later
make it private), enumerate the already tied anchor block
\[
C^-_{l,s}=\{a_r : r<v \text{ and } \Theta_{m,s}(a_r)=b^*\}
\]
into $A$, declare satisfied, and initialize all lower-priority requirements.

If $x_v$ is distinct from all previous $x_u$, define
\[
\Theta_m(a_v)=b^*.
\]
Now unfreeze $a_v$. Thus $a_v$ is safely tied to the anchor $a_1$. Now check whether
\[
\Phi_{k,s}(x_v)\downarrow=y_v.
\]
Once this happens, move to State~4.

\item[State 4 (trap evaluation)]
If $y_v=b^*$, then keep the same trap point $b^*$ and all previously chosen
baits $a_1,\dots,a_v$ together with their values $x_1,\dots,x_v$, forget only
the temporary value $y_v$, set $v\gets v+1$, and return to State~2, where a
new bait for the new value of $v$ will be chosen. This is a passive loop and
does \emph{not} initialize lower-priority requirements.

If $y_v\neq b^*$, wait until $y_v$ is not frozen
for $m$ by any requirement and $\Lambda_{m,s}(y_v)$ is defined.
(If $\Lambda_m(y_v)$ is undefined and $y_v$ later becomes unfrozen for $m$,
then a later execution of the filler will define it.)

Once this happens, let
\[
c=\Lambda_{m,s}(y_v).
\]

If $c\notin A_s$, enumerate the current tied anchor block
\[
C_{l,s}=\{a_r : r\le v \text{ and } \Theta_{m,s}(a_r)=b^*\}
\]
into $A$.
If $c\in A_s$, do nothing.

Declare satisfied and initialize all lower-priority requirements.
\end{description}

\section{Verification}\label{sec:verification}

We now verify that the construction works as intended. First we show that every
valid parameter $m$ yields a c.e.\ set $B_m \equiv_m A$; next we prove finite
injury and the noncomputability of $A$; finally we verify that every candidate
finite-one reduction from $W_i$ to $B_m$ is defeated.

\begin{lemma}\label{lem:Bm-equivalent}
Fix $m=\langle i,j\rangle$ such that $\Delta_j\colon A\lem W_i$ is a total
computable reduction. Then the construction defines total computable functions
$\Theta_m$ and $\Lambda_m$, and the c.e. set
\[
B_m=\Lambda_m^{-1}(A)
\]
satisfies $B_m\equiv_m A$.
\end{lemma}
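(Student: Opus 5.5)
We will prove \Cref{lem:Bm-equivalent} in three steps: totality of $\Theta_m$ and $\Lambda_m$, the reduction $B_m\lem A$ via $\Lambda_m$, and the reduction $A\lem B_m$ via $\Theta_m$.

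\textbf{Totality.} Computability of $\Theta_m,\Lambda_m$ is immediate, since both are built as monotone unions of finite approximations $\Theta_{m,s},\Lambda_{m,s}$ produced effectively, and no value is ever changed once defined. Totality is the first real step. For $z$ outside $\bigcup_k\omega^{[m,k]}$, the filler at any stage $s\ge m,z$ either finds a value already defined or defines one, so we only need to check that the partner value is never left undefined. Here we must argue that such $z$ only ever receives values through clause~(1), or through clause~(2) as a freshly chosen partner. Fresh partners are always chosen inside the same column $\omega^{[m,k]}$, so a point outside the columns is touched only by clause~(1), which sets both values at once.

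For $z\in\omega^{[m,k]}$ the filler acts once $z$ is unfrozen, so the key claim is that every point is frozen for $m$ only finitely long. A bait $a_v$ of $\mathcal R_{\langle m,k\rangle}$ is frozen from its choice until one of three events: State~3 is reached, the run is initialized, or (if $\Delta_j(a_v)$ never converges) never. Since $\Delta_j$ is total by hypothesis, $\Delta_{j,s}(a_v)$ eventually converges. If the requirement is visited at cofinitely many stages, which holds since every $\mathcal Q_p$ is visited at every stage $s\ge p$, it then reaches State~3 and unfreezes $a_v$. The trap point $b^*$ and the tied baits get $\Lambda_m(b^*)=a_1$ and $\Theta_m(a_v)=b^*$ from the module; the filler supplies the remaining values. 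This step needs care: we must check that the module only defines values at fresh or currently undefined arguments, so that nothing is defined twice.

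\textbf{$B_m\lem A$.} This holds trivially via $\Lambda_m$, by definition of $B_m$; in particular $B_m$ is c.e. because $A$ is.

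\textbf{$A\lem B_m$.} This is the main obstacle. It suffices to verify $x\in A\iff\sigma_m(x)\in A$. We will prove the dichotomy of \Cref{sec:virtual-targets}, splitting on how $\Theta_m(x)$ was defined.
\begin{enumerate}[label=\textup{(\roman*)}]
\item If $\Theta_m(x)$ was defined by filler clause~(1), then $\sigma_m(x)=x$.
\item If $\Theta_m(x)=y$ fresh by clause~(2a), then $\Lambda_m(y)=x$ was set at the same time, and $y$ fresh means $\Lambda_m(y)$ was not defined earlier, so $\sigma_m(x)=x$.
\item If $x$ was chosen fresh as a partner in clause~(2b), then $\Theta_m(x)=z$ with $\Lambda_m(z)=x$ defined simultaneously, so again $\sigma_m(x)=x$.
\item If $x=a_v$ was tied by $\mathcal R_l$, then $\sigma_m(x)=\Lambda_m(b^*)=a_1$. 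This requires $b^*$ to be fresh, so the filler never assigned $\Lambda_m(b^*)$ first. It also requires $a_1$ itself to be tied, which holds because $\Delta_j(a_1)$ has no earlier value to collide with.
\end{enumerate}

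The remaining task is to show that $x\in A\iff a_1\in A$ in case~(iv), and that no private point is ever enumerated into $A$ except as a $\mathcal D_n$ witness. Baits are enumerated only via the blocks $C^-_{l,s}$ or $C_{l,s}$, and each such block is exactly the set of currently tied baits of that run, which contains $a_1$. Enumerations happen only at the terminal action, which ends the run, so no bait is tied after its block has entered $A$. An untied collision bait is never enumerated. Initialization discards baits, but any tied bait keeps its $\Theta_m$-value, and those baits are never enumerated later, since later runs use fresh baits. Hence the tied set of each run is either entirely in $A$ or entirely outside $A$. $\mathcal D_n$ witnesses lie in the neutral column $\omega^{[*]}$ and are handled by case~(i).

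The delicate point, and the one we expect to take the most care, is interaction across different requirements $\mathcal R_{\langle m,k\rangle}$ with the same $m$ but different $k$, and across the filler. We will show disjointness using the columns $\omega^{[m,k]}$ together with freshness.
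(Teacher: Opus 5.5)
Your proposal is correct and follows essentially the paper's proof. Totality comes from each bait being frozen only until $\Delta_j(a_v)$ converges or its run is initialized, after which the filler supplies missing values; $\Lambda_m$ is the trivial reduction; and the private/tied dichotomy handles $\sigma_m$, where your uniform ``the tied set of a run enters $A$ all at once or never'' argument compresses the paper's five-case analysis of the run's fate. You also usefully make explicit two points the paper leaves implicit: points outside the $m$-columns are only ever touched by clause~(1), which sets both values together, and $a_1$ is itself tied.

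One correction concerns your extra claim that no private point enters $A$ except as a $\mathcal D_n$ witness. It is unnecessary, since $\sigma_m(x)=x$ already settles the private case. It is also false for points private for $m$, because tied baits of some $\mathcal R_{\langle m',k'\rangle}$ with $m'\neq m$ are handled by clause~(1) for $m$ and can still be enumerated into $A$.
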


\begin{proof}
We first show that every number is eventually permanently unfrozen for $m$.

An element is frozen for $m$ only while it is serving as the current bait of
some requirement $\mathcal R_{\langle m,k\rangle}$ in State~2. Since
$\Delta_j$ is total, such a bait cannot remain frozen forever: either that run
eventually reaches State~3, in which case the bait is unfrozen there, or the
run is initialized earlier by a higher-priority requirement, in which case the
bait is unfrozen at initialization. By freshness, once a number has been chosen
as a bait, it is never chosen again. Hence every number is frozen for $m$ only
for finitely many stages, and from some stage onward it is permanently unfrozen.

Now fix $z$. Once the stage is large enough that $z\le s$ and $z$ is no longer
frozen for $m$, the filler acts on $z$ at every later stage until all still
undefined values among $\Theta_m(z)$ and $\Lambda_m(z)$ have been supplied.
(Some of these values may already have been defined earlier by a requirement
module, for instance $\Theta_m(a_v)=b^*$ or $\Lambda_m(b^*)=a_1$.)
Therefore both $\Theta_m(z)$ and $\Lambda_m(z)$ are eventually defined for
every $z$.

Since values are assigned at most once, both graphs are c.e.\ and single-valued.
Because they are total, it follows that $\Theta_m$ and $\Lambda_m$ are total
computable functions.
The set
\[
B_m=\Lambda_m^{-1}(A)
\]
is c.e.\ because $A$ is c.e.\ and $\Lambda_m$ is total computable. By
definition, $\Lambda_m$ reduces $B_m$ to $A$.

It remains to show that $\Theta_m$ reduces $A$ to $B_m$. Fix $x\in\omega$ and
write
\[
\sigma_m=\Lambda_m\circ\Theta_m.
\]

We claim that exactly one of the following two alternatives holds.

\smallskip
\noindent\emph{Private case.}
Assume that $x$ is never tied in any run of any requirement
$\mathcal R_{\langle m,k\rangle}$. Then
\[
\sigma_m(x)=x.
\]

Indeed, since $x$ is never tied, the only possible way to define $\Theta_m(x)$
is through the filler. When the filler defines $\Theta_m(x)$, one of two things
happens.

Either the filler acts directly on $z=x$ (via clause 1 or 2a) and writes
\[
\Theta_m(x)=y,
\qquad
\Lambda_m(y)=x.
\]

Or else $\Theta_m(x)$ is created as the companion assignment (via clause 2b) to some definition
\[
\Lambda_m(z)=x,
\qquad
\Theta_m(x)=z.
\]

In either case,
\[
\Lambda_m(\Theta_m(x))=x.
\]
Since no module ever ties $x$, no later action can replace this value of
$\Theta_m(x)$. Hence $\sigma_m(x)=x$.

\smallskip
\noindent\emph{Tied case.}
Assume that $x$ is tied at some stage. By freshness, this can happen only once:
there is a unique requirement $l=\langle m,k\rangle$ and a unique run $\rho$ of
that requirement such that $x$ is one of the baits chosen in that run and is
tied there. Let
\[
a_1^{\,l,\rho}
\qquad\text{and}\qquad
b^{*,l,\rho}
\]
be the anchor and trap point of that run.

Since $x$ was fresh when chosen as a bait in the run $\rho$, the value
$\Theta_m(x)$ was still undefined at that moment. By the operational semantics
of a stage, in the noncollision branch of State~3 the assignment
\[
\Theta_m(x)=b^{*,l,\rho}
\]
is made before $x$ is unfrozen. Hence the filler never gives $x$ a private
$\Theta$-value.
Also, when the trap point of that run is chosen, the
construction defines
\[
\Lambda_m(b^{*,l,\rho})=a_1^{\,l,\rho}.
\]
Therefore
\[
\sigma_m(x)=a_1^{\,l,\rho}.
\]
Thus, to prove that $\Theta_m$ reduces $A$ to $B_m$, it is enough to show that
whenever $x$ is tied in a run $\rho$ as above,
\[
x\in A \iff a_1^{\,l,\rho}\in A. \tag{$*$}
\]

Fix such a tied $x$. We now analyze the fate of the run $\rho$.

Before doing so, note that because $m$ is valid, whenever a run of some
$\mathcal R_{\langle m,k\rangle}$ reaches State~4 with a value $y_v\neq b^*$
and is not initialized first, the waiting clause in State~4 is eventually met:
every number is eventually permanently unfrozen for $m$, and once $y_v$ is
unfrozen, the filler eventually defines $\Lambda_m(y_v)$ if it is still
undefined.

There are now five possibilities.

\begin{enumerate}
\item \emph{The run $\rho$ is initialized by a higher-priority requirement
before it performs any terminal action.}

Then neither $x$ nor $a_1^{\,l,\rho}$ ever enters $A$. Indeed, both were fresh
when chosen in the run $\rho$, so neither had previously been enumerated into
$A$. Moreover, only $\mathcal R_l$ can enumerate elements from the dedicated
column $\omega^{[m,k]}$; initialization abandons the old local data of $\rho$;
and every later bait of $\mathcal R_l$ is fresh and therefore different from
both $x$ and $a_1^{\,l,\rho}$.

\item \emph{The run $\rho$ terminates by a collision in State~3.}

Since $x$ is tied, it cannot be the current collision witness $a_v$, because
the collision branch explicitly leaves $a_v$ untied. So $x=a_u$ for some
$u<v$. Let $s$ be the stage at which this collision occurs.
At stage $s$, both $x$ and the anchor
$a_1^{\,l,\rho}$ belong to the already tied anchor block
\[
C^-_{l,s}=\{a_r:r<v\text{ and }\Theta_{m,s}(a_r)=b^{*,l,\rho}\},
\]
and that block is enumerated into $A$. Hence $x$ and $a_1^{\,l,\rho}$ enter
$A$ together.

\item \emph{The run $\rho$ reaches State~4 with some $y_v\neq b^{*,l,\rho}$
and terminates there in the branch $c\notin A_t$.}

Let $t$ be the stage at which this terminal action occurs. At stage $t$, the current tied anchor block
\[
C_{l,t}=\{a_r:r\le v\text{ and }\Theta_{m,t}(a_r)=b^{*,l,\rho}\}
\]
is enumerated into $A$. Since $x$ is one of the tied baits of this run, both
$x$ and $a_1^{\,l,\rho}$ belong to $C_{l,t}$. Hence again they enter $A$
together.

\item \emph{The run $\rho$ reaches State~4 with some $y_v\neq b^{*,l,\rho}$
and terminates there in the branch $c\in A_t$.}

Let $t$ be the stage at which this terminal action occurs. 

Then the run performs no enumeration of its current anchor block. Since the
elements of that block were fresh when chosen, none of them had been previously
enumerated into $A$. Later initializations do not change this conclusion: only
$\mathcal R_l$ ever enumerates elements from the column $\omega^{[m,k]}$, and
every later bait of $\mathcal R_l$ is fresh. Hence no element of the old
anchor block of $\rho$ is ever enumerated into $A$. In particular, both
$x$ and $a_1^{\,l,\rho}$ remain outside $A$.
\item \emph{After $x$ has been tied, the run $\rho$ never performs a terminal
action.}

This can happen in two ways (note that the module cannot wait forever in State~2
because $\Delta_j$ is total, nor in State~4 by the observation above):
either the module waits forever in State~3 because
some computation $\Phi_k(x_v)$ never converges, or it returns from State~4 to
State~2 infinitely often through the passive loop $y_v=b^*$.
If the run is later initialized, we are already in case~(1). Otherwise no
terminal enumeration ever occurs, and only $\mathcal R_l$ could enumerate
elements from the column $\omega^{[m,k]}$. Since $x$ and $a_1^{\,l,\rho}$ were
fresh when chosen, neither had previously entered $A$. Therefore both
$x$ and $a_1^{\,l,\rho}$ remain outside $A$.

\end{enumerate}

In every case, $(*)$ holds. Combining the private case and the tied case, we
obtain for every $x\in\omega$:
\[
x\in A \iff \sigma_m(x)\in A.
\]
Since
\[
\sigma_m(x)=\Lambda_m(\Theta_m(x)),
\]
this is equivalent to
\[
x\in A \iff \Theta_m(x)\in B_m.
\]
Therefore $\Theta_m$ reduces $A$ to $B_m$. Together with the reduction
$\Lambda_m\colon B_m\lem A$, this yields
\[
A\equiv_m B_m.
\]
\end{proof}

\begin{lemma}\label{lem:finite-injury}
Every requirement is initialized only finitely many times. Consequently each
diagonalization requirement $\mathcal D_n$ is met, and $A$ is nonrecursive.
\end{lemma}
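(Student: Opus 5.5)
We argue by induction on the priority index $p$ that each $\mathcal Q_p$ is initialized only finitely often and from some stage on is permanently inactive or waiting with fixed local data. The only events that initialize other requirements are the terminal actions: State~2 of $\mathcal D_n$, the collision branch of State~3 of $\mathcal R_l$, and the terminal branch of State~4 of $\mathcal R_l$. The passive loop $y_v=b^*$ initializes nothing. So it suffices to show that each requirement performs only finitely many terminal actions.

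\emph{Induction step.} Suppose $\mathcal Q_0,\dots,\mathcal Q_{p-1}$ are initialized only finitely often. Each of them then performs at most one terminal action per run. After that action it stays inactive until it is initialized again. Hence each has finitely many runs and finitely many terminal actions. Let $s_0$ be a stage after all these actions; after $s_0$, $\mathcal Q_p$ is never initialized. Its final run then performs at most one terminal action, so $\mathcal Q_p$ in turn initializes lower-priority requirements only finitely often, which carries the induction forward. Note that we do not need $\mathcal R_l$'s final run to terminate. An infinite passive loop, or an infinite wait in State~3 or State~4, causes no initializations, so finite injury holds regardless of whether $\Delta_j$ or $\Phi_k$ is total. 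This is also why the construction avoids having the passive loop initialize anything.

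\emph{Satisfaction of $\mathcal D_n$.} Take the final run of $\mathcal D_n$, which starts after its last initialization, with witness $x_n\in\omega^{[*]}$. If $\varphi_n$ is total, $\varphi_{n,s}(x_n)$ eventually converges and the module reaches State~2. If the value is $0$, $x_n$ enters $A$, so $A(x_n)=1\neq\varphi_n(x_n)$. If the value is nonzero, we must show $x_n$ never enters $A$.

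This last point is the one needing a small argument. Only $\mathcal D$-requirements enumerate elements of the neutral column: $\mathcal R_l$ enumerates only baits, which lie in $\omega^{[m,k]}$, and the filler enumerates nothing into $A$. Any other $\mathcal D_{n'}$ enumerates only its own fresh witness, and freshness makes that witness distinct from $x_n$. $\mathcal D_n$ itself is never initialized again, so it never picks or enumerates another witness. Hence $x_n\notin A$ and $A(x_n)=0\neq\varphi_n(x_n)$.

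Since $\mathcal D_n$ holds for every $n$, no total computable $\varphi_n$ is the characteristic function of $A$, so $A$ is nonrecursive. It is c.e. because it is enumerated effectively stage by stage. The main point to check carefully is the bookkeeping that terminal actions are the only source of initialization and occur at most once per run; this follows directly from the module descriptions and the convention that a satisfied requirement stays inactive until it is initialized.
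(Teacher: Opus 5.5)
Your proof is correct and follows the paper's argument. The induction on priority uses the same facts: only terminal actions initialize, and each run has at most one. The column-disjointness and freshness argument for $x_n\notin A$ is also the paper's; your observation that all $\mathcal D$-witnesses are pairwise distinct and each run enumerates only its own witness is just a more direct version of the paper's split into stages before and after the terminal action of $\mathcal D_n$.

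One small slip: your opening claim that each $\mathcal Q_p$ is eventually inactive or waiting with fixed local data is false for an $\mathcal R_l$ caught in an infinite passive loop, since it keeps choosing new baits. Your induction never uses that claim, only finiteness of initializations, so nothing breaks.
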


\begin{proof}
We argue by induction on the priority ordering.

Fix $p$, and assume that every requirement $\mathcal Q_q$ with $q<p$
initializes lower-priority requirements only finitely many times. Let $s_0$ be
a stage after the last such action. After stage $s_0$, the requirement
$\mathcal Q_p$ is never initialized again.

If $\mathcal Q_p=\mathcal D_n$, then we distinguish two cases.

If $\mathcal D_n$ has already declared satisfied by stage $s_0$, then it remains
inactive forever and never initializes lower-priority requirements again.

Otherwise, from stage $s_0$ onward it works within its final run and hence with
a final witness $x_n$ (retaining the one already stored, or choosing one at its
next visit if none has yet been chosen). If the computation $\varphi_n(x_n)$
eventually converges, then $\mathcal D_n$ performs the terminal action of this
final run and initializes lower-priority requirements once. If
$\varphi_n(x_n)$ never converges, it waits forever in State~1 and never
initializes lower-priority requirements again.

Hence after stage $s_0$, the requirement $\mathcal D_n$ initializes
lower-priority requirements at most once. Since there are only finitely many
stages before $s_0$, it follows that $\mathcal D_n$ initializes lower-priority
requirements only finitely many times in total.

If $\mathcal Q_p=\mathcal R_l$, then we again distinguish two cases.

If $\mathcal R_l$ has already declared satisfied by stage $s_0$, then it remains
inactive forever and never initializes lower-priority requirements again.

Otherwise, from stage $s_0$ onward it works within its final run. Since State~1
moves immediately to State~2, there are five possibilities. It may wait forever
in State~2 for some computation $\Delta_j(a_v)$ to converge; or it may wait
forever in State~3 for some computation $\Phi_k(x_v)$ to converge; or it may
wait forever in State~4 for some value $y_v$ to become unfrozen for $m$ or for
$\Lambda_m(y_v)$ to be defined; or it may run forever in the passive trap loop;
or it may eventually perform the terminal action of this final run (by
collision or trap exit). In the first four cases it never initializes
lower-priority requirements again. In the fifth case it does so exactly once.

Hence after stage $s_0$, the requirement $\mathcal R_l$ initializes
lower-priority requirements at most once. Since there are only finitely many
stages before $s_0$, it follows that $\mathcal R_l$ initializes lower-priority
requirements only finitely many times in total.
This completes the induction.

Therefore every requirement initializes lower-priority requirements only finitely
many times. Since a requirement $\mathcal Q_p$ can be initialized only by one of
the finitely many higher-priority requirements $\mathcal Q_q$ with $q<p$, it
follows that every requirement is initialized only finitely many times.

Now fix $n$. After the last initialization of $\mathcal D_n$, its final witness
$x_n$ is never abandoned again. Assume that $\varphi_n$ is total.

If $\varphi_n(x_n)=0$, then $\mathcal D_n$ enumerates $x_n$ into $A$, so
$A(x_n)=1\neq \varphi_n(x_n)$.

If $\varphi_n(x_n)\neq 0$, then $\mathcal D_n$ performs no enumeration at its
terminal stage. We claim that $x_n$ remains permanently outside $A$.

First, $x_n$ was fresh when chosen, so $x_n\notin A$ at that moment.
Before the terminal stage of $\mathcal D_n$, no other requirement can enumerate
$x_n$ into $A$: every negative requirement $\mathcal R_l$ acts only inside its
dedicated column $\omega^{[m,k]}$, which is disjoint from the neutral column
$\omega^{[*]}$, while every positive requirement $\mathcal D_r$ can enumerate
only its own witness, and every other such witness is different from $x_n$
(earlier ones by the freshness of $x_n$ when chosen, later ones by their own
freshness).

Now consider stages after the terminal action of $\mathcal D_n$. No
higher-priority requirement later performs a terminal action, for any such
action would initialize $\mathcal D_n$ again. At the terminal stage of
$\mathcal D_n$, every lower-priority requirement that has already been visited
is initialized, hence abandons its old local data; and any lower-priority
requirement first visited only later chooses its witnesses fresh, again
different from $x_n$. As above, negative requirements never act inside
$\omega^{[*]}$.

Hence no later action can enumerate $x_n$ into $A$. Therefore
$A(x_n)=0\neq \varphi_n(x_n)$.

Thus $\mathcal D_n$ is met. Since $n$ was arbitrary, $A$ differs from every
total computable function. Therefore $A$ is nonrecursive.
\end{proof}

\begin{lemma}\label{lem:no-fin-one}
Fix $m=\langle i,j\rangle$ such that $\Delta_j\colon A\lem W_i$ is a total
computable reduction. Then
for every $k$, $\Phi_k$ is not a finite-one reduction from $W_i$ to $B_m$.
\end{lemma}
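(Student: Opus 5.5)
Fix $k$, let $l=\langle m,k\rangle$, and suppose toward a contradiction that $\Phi_k$ is a finite-one reduction from $W_i$ to $B_m$ (in particular total). By \Cref{lem:finite-injury}, $\mathcal R_l$ is initialized only finitely often. Let $\rho$ be its final run, started at some stage $s_0$ after which no higher-priority requirement acts. By \Cref{lem:Bm-equivalent}, $\Theta_m$ and $\Lambda_m$ are total, $B_m=\Lambda_m^{-1}(A)$, and every number is eventually permanently unfrozen for $m$. We then analyse the fate of $\rho$, following the same case list as in the proof of \Cref{lem:finite-injury}, and show that each possible outcome contradicts one of our hypotheses.

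\emph{Outcomes without a terminal action.} The run cannot wait forever in State~2, since $\Delta_j$ is total. It cannot wait forever in State~3, since $\Phi_k$ is total. It cannot wait forever in State~4, by the waiting-clause observation in the proof of \Cref{lem:Bm-equivalent}. Suppose it loops through the passive branch $y_v=b^*$ infinitely often. Every noncollision visit to State~3 produces a value $x_v$ distinct from all earlier ones, and a collision would be terminal. So we obtain infinitely many distinct $x_v$ with $\Phi_k(x_v)=b^*$, and $\Phi_k$ is not finite-one.

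\emph{Collision outcome.} Suppose $x_v=x_u$ with $u<v$. Then $a_u$ is tied, while $a_v$ is left untied and is later privatized by the filler. Because $\rho$ is final and $a_v$ is fresh and untied, $a_v$ never enters $A$: only $\mathcal R_l$ enumerates in $\omega^{[m,k]}$, and it enumerates only tied blocks. On the other hand, $a_u$ lies in the enumerated block $C^-_{l,s}$, so $a_u\in A$. Since $\Delta_j(a_u)=\Delta_j(a_v)$, one of $a_u,a_v$ is mapped wrongly, contradicting that $\Delta_j$ is a reduction $A\lem W_i$.

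\emph{Trap-exit outcome.} Suppose $y_v\neq b^*$ and we reach stage $t$ with $c=\Lambda_m(y_v)$, so that $y_v\in B_m\iff c\in A$. Note that $a_v$ is tied with $\sigma_m(a_v)=a_1$, and $x_v=\Delta_j(a_v)$, so $x_v\in W_i\iff a_v\in A$.
\begin{itemize}
\item If $c\notin A_t$, the block containing $a_v$ enters $A$, so $x_v\in W_i$. We need $c\notin A$ forever. For this we show that $c$ lies outside the block $C_{l,t}$, and that no later action puts $c$ into $A$: lower-priority requirements are initialized, and any later choices are fresh.
\item If $c\in A_t$, the block stays out of $A$ forever, as in case~(4) of \Cref{lem:Bm-equivalent}. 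So $x_v\notin W_i$ while $y_v\in B_m$.
\end{itemize}
Either way $\Phi_k(x_v)\in B_m\not\iff x_v\in W_i$, a contradiction.

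\emph{Main obstacle.} The hard step is the claim $c\notin C_{l,t}$ in the first trap-exit case. I would first try to show that $y_v\neq b^*$ forces $c=\Lambda_m(y_v)$ to be different from $a_1$ and from all tied baits $a_r$. This is plausible because the $\Lambda_m$-values pointing into the column $\omega^{[m,k]}$ at tied baits come only from the trap point or from filler pairings; tied baits receive $\Theta_m$-values before being unfrozen, so the filler's clause~2a never pairs them. However, clause~2b could still set $\Lambda_m(z)=a_r$ if that value were fresh. That is impossible, because the fresh $x$ chosen there exceeds every number used earlier, including the bait $a_r$. If $c$ could nonetheless coincide with a block element, I would fall back on arguing that the enumeration then makes $y_v\in B_m$ and $x_v\in W_i$ consistent, and look for a different witness among the $a_r$.
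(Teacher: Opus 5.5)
Your proposal is correct and follows essentially the same route as the paper. In the final run of $\mathcal R_l$, a collision contradicts that $\Delta_j$ is a reduction, an infinite passive loop yields the infinite fibre $\Phi_k^{-1}(b^*)$, and at the trap exit the branches $c\notin A_t$ and $c\in A_t$ each give an $x_v$ on which $\Phi_k$ errs. Your sketch of the key step $c\notin C_{l,t}$ is exactly the paper's Step~5, and it suffices on its own, so the fallback you mention is unnecessary (and would give no contradiction anyway). That sketch shows that the only $\Lambda_m$-preimage of $a_1$ is $b^*$ and that the baits $a_u$ with $u\ge 2$ have none. This holds because clause~2a never acts on a bait before it is tied, while clause~2b, clause~1 and other runs' anchor assignments only write values that differ from the current baits by freshness or by column.
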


\begin{proof}
Let $l=\langle m,k\rangle$.
By Lemma~\ref{lem:finite-injury}, the requirement $\mathcal R_l$ is initialized
only finitely many times by higher-priority requirements. Let $s_0$ be the
stage of the last such initialization, if there is one, and put $s_0=2l+1$
otherwise. Let $\rho$ be the run of $\mathcal R_l$ active from stage $s_0$
onward. This is the final run.

No higher-priority requirement performs a terminal action at any later stage.
Indeed, if some higher-priority requirement of index $p<2l+1$ acted terminally
at a stage $t>s_0$, then it would initialize every requirement $\mathcal Q_q$
with $p<q\le t$; since $t\ge 2l+1$, this would include $\mathcal R_l$,
contradicting the choice of $s_0$.

We show that $\Phi_k$ cannot be a finite-one reduction from $W_i$ to $B_m$.

\smallskip
\noindent\emph{Step 1: no collision occurs in the final run.}

Suppose toward a contradiction that at some stage $s$ of the final run there is
a collision
\[
x_v=x_u
\qquad\text{with }u<v.
\]
Then $a_u$ has already passed through the noncollision branch of State~3 and is
therefore tied, whereas $a_v$ is still untied. At stage $s$ the collision
branch enumerates the already tied anchor block
\[
C^-_{l,s}=\{a_r:r<v\text{ and }\Theta_{m,s}(a_r)=b^*\}
\]
into $A$, so in particular
\[
a_u\in A.
\]

On the other hand, $a_v$ was fresh when chosen, so $a_v\notin A$ at that
moment. Before stage $s$, only $\mathcal R_l$ could enumerate elements from the
column $\omega^{[m,k]}$, and while serving as the current bait in State~2,
$a_v$ is frozen and is not enumerated into $A$. At stage $s$, the collision
branch explicitly unfreezes $a_v$ and leaves it untied.
Since this is the final run, no higher-priority terminal action will occur
later. At stage $s$, every lower-priority requirement $\mathcal Q_q$ with
$q>2l+1$ and $q\le s$ is initialized immediately and hence abandons its old
local data; thereafter, whether later in the same stage or at later stages, any
witness or bait newly chosen by a lower-priority requirement is fresh and
therefore different from $a_v$. Moreover, only $\mathcal R_l$ can enumerate
elements from the column $\omega^{[m,k]}$.
Hence
\[
a_v\notin A
\]
permanently.
But
\[
\Delta_j(a_u)=x_u=x_v=\Delta_j(a_v),
\]
contradicting that $\Delta_j$ reduces $A$ to $W_i$. Therefore no collision
occurs in the final run, and the values $x_v$ are pairwise distinct.

\smallskip
\noindent\emph{Step 2: all relevant values $\Phi_k(x_v)$ converge, or else
$\Phi_k$ is not a finite-one reduction.}

If for some $v$ reached in State~3 of the final run the computation
$\Phi_k(x_v)$ diverges, then the module waits forever in State~3. In
particular, $\Phi_k$ is not total, and hence cannot be a finite-one reduction
from $W_i$ to $B_m$. Therefore, for the rest of the proof, assume that every
value $\Phi_k(x_v)$ arising in the final run converges.

\smallskip
\noindent\emph{Step 3: the passive loop already destroys finite-oneness if it
occurs infinitely often.}

If State~4 returns to State~2 infinitely often, then
\[
\Phi_k(x_v)=b^*
\]
for infinitely many pairwise distinct inputs $x_v$ (pairwise distinct by
Step~1). Hence the fibre
\[
\Phi_k^{-1}(\{b^*\})
\]
is infinite, so $\Phi_k$ is not finite-one.

Therefore we may assume that State~4 returns to State~2 only finitely often.
Since $m$ is valid, the final run cannot wait forever in State~2; by Step~2 it
cannot wait forever in State~3; and by Step~1 no collision occurs. Hence there
is a first $v$ in the final run such that
\[
\Phi_k(x_v)\downarrow=y_v\neq b^*.
\]

\smallskip
\noindent\emph{Step 4: choose the trap-exit stage.}

Because $m$ is valid, every number is eventually permanently unfrozen for $m$.
Hence $y_v$ is eventually permanently unfrozen for $m$. Moreover, once $y_v$ is
unfrozen, the filler eventually defines $\Lambda_m(y_v)$ if it is still
undefined. Since this is the final run, $\mathcal R_l$ is never initialized
again. Therefore there is a first stage $t$ at which the waiting clause in
State~4 is met, namely such that $y_v$ is not frozen for $m$ and
$\Lambda_{m,t}(y_v)$ is defined. Let
\[
c=\Lambda_{m,t}(y_v).
\]

\smallskip
\noindent\emph{Step 5: $c$ lies outside the current anchor block.}

Let
\[
C_{l,t}=\{a_r:r\le v\text{ and }\Theta_{m,t}(a_r)=b^*\}
\]
be the current anchor block of the final run at stage $t$. We claim that
\[
c\notin C_{l,t}.
\]

First, $c\neq a_1$. The assignment
\[
\Lambda_m(b^*)=a_1
\]
is made when the trap point of the current run is chosen. We claim that $b^*$
is the unique point mapped to $a_1$ by $\Lambda_m$.

No other module instruction can assign the value $a_1$ to $\Lambda_m$. State~3
defines only $\Theta$-values, and State~4 defines no new values of $\Theta_m$
or $\Lambda_m$. If some run of a requirement $\mathcal R_{\langle m,k'\rangle}$
defines a value of $\Lambda_m$ in State~2, that value is its own anchor
$a'_1$. For $k'\neq k$, we have $a'_1\in\omega^{[m,k']}$ and hence
$a'_1\neq a_1$; for other runs of $\mathcal R_l$, freshness again ensures that
their anchors are different from $a_1$.

As for the filler, clause \textup{(1)} never applies to $a_1$, since
$a_1\in\omega^{[m,k]}$. A new preimage of $a_1$ under $\Lambda_m$ could be
created by clause \textup{(2a)} only by acting on $z=a_1$. No such earlier
action occurred, since $a_1$ was later chosen fresh above every number
previously mentioned. After $a_1$ is chosen, it is frozen until the
noncollision branch of State~3 defines
\[
\Theta_m(a_1)=b^*,
\]
so clause \textup{(2a)} never acts on $a_1$ while $\Theta_m(a_1)$ is undefined;
after that moment, clause \textup{(2a)} no longer applies to $a_1$. Finally,
clause \textup{(2b)} cannot assign the value $a_1$: before $a_1$ is chosen this
is excluded by the later freshness of $a_1$, and afterwards clause
\textup{(2b)} writes only fresh values, hence never the already chosen number
$a_1$.

Thus $b^*$ is the unique point with
\[
\Lambda_m(b^*)=a_1.
\]
Since $y_v\neq b^*$, we obtain $c\neq a_1$.

Next, $c\neq a_u$ for every $2\le u\le v$. Fix such a $u$. We claim that no
point maps to $a_u$ under $\Lambda_m$.

No module instruction can assign the value $a_u$ to $\Lambda_m$. State~3
defines only $\Theta$-values, and State~4 defines no new values of $\Theta_m$
or $\Lambda_m$. In State~2, a run of some requirement
$\mathcal R_{\langle m,k'\rangle}$ can define a value of $\Lambda_m$, but it
writes only its own anchor $a'_1$. If $k'\neq k$, then
$a'_1\in\omega^{[m,k']}$ and so $a'_1\neq a_u\in\omega^{[m,k]}$; if it is a
different run of $\mathcal R_l$, freshness again ensures that its anchor is
different from $a_u$.

As for the filler, clause \textup{(1)} never applies to $a_u$, since
$a_u\in\omega^{[m,k]}$. A new preimage of $a_u$ under $\Lambda_m$ could be
created by clause \textup{(2a)} only by acting on $z=a_u$. No such earlier
action occurred, since $a_u$ was later chosen fresh above every number
previously mentioned. After $a_u$ is chosen, it is frozen until its
noncollision visit to State~3, where
\[
\Theta_m(a_u)=b^*
\]
is defined before $a_u$ is unfrozen. Hence clause \textup{(2a)} never acts on
$z=a_u$ while $\Theta_m(a_u)$ is undefined, and afterwards clause
\textup{(2a)} no longer applies to $a_u$. Finally, clause \textup{(2b)} cannot
assign the value $a_u$: before $a_u$ is chosen this is excluded by the later
freshness of $a_u$, and afterwards clause \textup{(2b)} writes only fresh
values, hence never the already chosen bait $a_u$.

Therefore no point maps to $a_u$ under $\Lambda_m$, and in particular
$c\neq a_u$ for every $2\le u\le v$. Hence $c\notin C_{l,t}$, as claimed.

\smallskip
\noindent\emph{Step 6: both terminal branches of State~4 contradict that
$\Phi_k$ reduces $W_i$ to $B_m$.}

\begin{enumerate}

\item Suppose first that
\[
c\notin A_t.
\]
Then the terminal action of State~4 enumerates the current anchor block
$C_{l,t}$ into $A$. In particular,
\[
a_v\in A,
\]
so by validity of $\Delta_j$,
\[
x_v=\Delta_j(a_v)\in W_i.
\]

On the other hand,
\[
y_v\in B_m \iff \Lambda_m(y_v)=c\in A.
\]
We now show that $c\notin A$ permanently. First, by Step~5,
$c\notin C_{l,t}$, so the terminal action of this run does not enumerate
$c$ into $A$. Furthermore, no higher-priority requirement can later perform a
terminal action, by the choice of the final run.
At stage $t$, every lower-priority requirement $\mathcal Q_q$ with
$q>2l+1$ and $q\le t$ is initialized immediately, and hence abandons any stored
witness or bait equal to $c$; thereafter, whether later in the same stage or at
later stages, any witness or bait newly chosen by a lower-priority requirement
is fresh and therefore different from $c$.
Since requirements enumerate numbers into $A$
only through their chosen witnesses or baits, no later action can enumerate
$c$ into $A$. Therefore
\[
c\notin A,
\]
so
\[
y_v\notin B_m.
\]
Thus $\Phi_k(x_v)=y_v\notin B_m$ while $x_v\in W_i$, contradicting that
$\Phi_k$ reduces $W_i$ to $B_m$.

\item Suppose instead that
\[
c\in A_t.
\]
Then already
\[
y_v\in B_m.
\]
Since $a_v$ was fresh when chosen, we had $a_v\notin A$ at that moment. Up to
stage $t$, only $\mathcal R_l$ can enumerate elements from the column
$\omega^{[m,k]}$, and the current run has performed no enumerations so far, as they
only occur at terminal actions. Hence
\[
a_v\notin A_t.
\]
In this branch the terminal action of State~4 does not enumerate $C_{l,t}$ into
$A$.
After this terminal action, $\mathcal R_l$ becomes inactive. Since this is the
final run, it is never initialized again; and no other requirement can
enumerate elements from the column $\omega^{[m,k]}$.
Therefore
\[
a_v\notin A
\]
permanently. By validity of $\Delta_j$,
\[
x_v=\Delta_j(a_v)\notin W_i.
\]
Thus $\Phi_k(x_v)=y_v\in B_m$ while $x_v\notin W_i$, again contradicting that
$\Phi_k$ reduces $W_i$ to $B_m$.
\end{enumerate}

Both branches lead to a contradiction. Therefore $\Phi_k$ is not a finite-one
reduction from $W_i$ to $B_m$.
\end{proof}

\begin{proof}[Proof of Theorem \ref{thm:main}]
By Lemma \ref{lem:finite-injury}, the construction produces a nonrecursive c.e.
set $A$.

Let $X\eqm A$. Choose $i$ with $W_i=X$, and choose $j$ such that
$\Delta_j\colon A\lem W_i$ is a total computable reduction. Put
$m=\langle i,j\rangle$.

By Lemma \ref{lem:Bm-equivalent}, the set
\[
B_m=\Lambda_m^{-1}(A)
\]
is c.e. and satisfies $B_m\eqm A$.

By Lemma \ref{lem:no-fin-one}, for every $k$, $\Phi_k$ is not a finite-one
reduction from $W_i$ to $B_m$. Since every total computable function appears as
some $\Phi_k$, it follows that $W_i\not\lfo B_m$. Therefore
$X\not\lfo B_m$.

Hence for every c.e. set $X\eqm A$ there is a c.e. set $B\eqm A$ such that
$X\not\lfo B$. This shows that the many-one degree of $A$ contains no
least finite-one degree.
\end{proof}

\section{Conclusion}

We have shown that Open Question~1 of Richter, Stephan, and Zhang has a negative answer, already within the class of computably enumerable many-one degrees. More precisely, we constructed a nonrecursive c.e.\ set $A$ such that the many-one degree of $A$ contains no least finite-one degree.

This places the general picture in a sharper perspective. On the one hand, positive answers are known for important natural subclasses, including a measure-one and comeager class of $m$-rigid sets and, in a companion paper, the c.e.\ many-one degrees containing a $D$-maximal set. On the other hand, the present construction shows that the existence of a least finite-one degree is not a universal feature of nonrecursive many-one degrees, not even in the c.e.\ setting.

It would be interesting to further clarify which structural properties of a many-one degree force the existence of a least finite-one degree, and how far the positive phenomena observed in special classes can be extended.

\section*{Acknowledgments}

This work is the result of an extended human--AI collaboration. 
Several structural ideas and technical arguments emerged from exploratory interaction with AI-based reasoning systems (Gemini Deep Think (Google DeepMind) and ChatGPT Pro (OpenAI)), which were used at different stages of the conceptual development and technical verification of this work. 
The author has fully reworked and verified all arguments and bears sole responsibility for their correctness.


\begin{thebibliography}{9}

\bibitem{Calude2002}
C.~S.~Calude,
\emph{Information and Randomness: An Algorithmic Perspective},
2nd ed., revised and extended,
Texts in Theoretical Computer Science. An EATCS Series,
Springer, Berlin, 2002.

\bibitem{ChongYu2015}
C.~T.~Chong and L.~Yu,
\emph{Recursion Theory: Computational Aspects of Definability},
De Gruyter Series in Logic and its Applications, vol.~8,
De Gruyter, Berlin/Boston, 2015.


\bibitem{Cintioli2026}
P.~Cintioli,
\emph{$m$-rigidity, finite-one, and bounded finite-one degrees inside typical many-one degrees},
arXiv:2603.02600 [math.LO], 2026.

\bibitem{DH2010}
R.~G.~Downey and D.~R.~Hirschfeldt,
\emph{Algorithmic Randomness and Complexity},
Theory and Applications of Computability,
Springer, New York, 2010.

\bibitem{LiVitanyi2008}
M.~Li and P.~M.~B.~Vit\'anyi,
\emph{An Introduction to Kolmogorov Complexity and Its Applications},
3rd ed.,
Texts in Computer Science,
Springer, New York, 2008.

\bibitem{Nies2009}
A.~Nies,
\emph{Computability and Randomness},
Oxford Logic Guides, vol.~51,
Oxford University Press, Oxford, 2009.

\bibitem{Odifreddi1981}
P.~G.~Odifreddi,
\emph{Strong reducibilities},
Bull.\ Amer.\ Math.\ Soc.\ (N.S.) \textbf{4} (1981), no.~1, 37--86.
doi:\,10.1090/S0273-0979-1981-14863-1.


\bibitem{OdifreddiCRT}
P.~G.~Odifreddi,
\emph{Classical Recursion Theory},
Studies in Logic and the Foundations of Mathematics, vol.~125,
North-Holland, Amsterdam, 1989.

\bibitem{OdifreddiCRT2}
P.~G.~Odifreddi,
\emph{Classical Recursion Theory, Volume II},
Studies in Logic and the Foundations of Mathematics, vol.~143,
Elsevier, Amsterdam, 1999.

\bibitem{Odifreddi1999}
P.~G.~Odifreddi,
\emph{Reducibilities},
in \emph{Handbook of Computability Theory},
E.~R.~Griffor (ed.),
Studies in Logic and the Foundations of Mathematics, vol.~140,
Elsevier/North-Holland, Amsterdam, 1999, pp.~89--119.
doi:\,10.1016/S0049-237X(99)80019-6.

\bibitem{Rogers1967}
H.~Rogers, Jr.,
\emph{Theory of Recursive Functions and Effective Computability},
McGraw--Hill, New York, 1967.
Reprinted by MIT Press, Cambridge, MA, 1987.

\bibitem{RSZ2026}
L.~Richter, F.~Stephan, and X.~Zhang,
\emph{Chains and antichains inside many-one degrees and variants},
preprint, 2026. Available at \url{https://linus-richter.github.io}.

\bibitem{Soare1987}
R.~I.~Soare,
\emph{Recursively Enumerable Sets and Degrees},
Perspectives in Mathematical Logic,
Springer-Verlag, Berlin, 1987.
\end{thebibliography}
\end{document}